\documentclass[letterpaper, 10 pt, conference]{ieeeconf}  
\usepackage[T1]{fontenc}
\usepackage[utf8]{inputenc}
\usepackage{xcolor}
\usepackage{mathtools}

\let\labelindent\relax
\usepackage{enumitem}

\newtheorem{theorem}{Theorem}

\newtheorem{proposition}[theorem]{Proposition}

\newtheorem{definition}[theorem]{Definition}

\allowdisplaybreaks

\IEEEoverridecommandlockouts                              

\usepackage{graphics} 
\usepackage{epsfig} 
\usepackage{amsmath} 
\usepackage{amssymb}  

\title{\LARGE \bf A mathematical framework for time-delay reservoir computing analysis}

\author{Anh Tuan Clabaut$^{1}$, Jean Auriol$^{1}$, Islam Boussaada$^{2, 3}$, and Guilherme Mazanti$^{2, 4}$
\thanks{$^{1}$Universit\'e Paris-Saclay, CNRS, CentraleSup\'elec, Laboratoire des signaux et syst\`emes, 91190 Gif-sur-Yvette, France. \newline {\tt\small firstname.lastname@l2s.centralesupelec.fr}.}\thanks{$^{2}$Universit\'e Paris-Saclay, CNRS, CentraleSup\'elec, Inria, Laboratoire des signaux et syst\`emes, 91190 Gif-sur-Yvette, France. \newline {\tt\small firstname.lastname@l2s.centralesupelec.fr}.}\thanks{$^{3}$IPSA, 94200 Ivry-sur-Seine, France.}\thanks{$^{4}$F\'ed\'eration de Math\'ematiques de CentraleSup\'elec, 91190 Gif-sur-Yvette, France.}}

\begin{document}

\maketitle
\thispagestyle{empty}
\pagestyle{empty}

\begin{abstract}
Reservoir computing is a well-established approach for processing data with a much lower complexity compared to traditional neural networks. Despite two decades of experimental progress, the core properties of reservoir computing (namely separation, robustness, and fading memory) still lack rigorous mathematical foundations. This paper addresses this gap by providing a control-theoretic framework for the analysis of time-delay-based reservoir computers. We introduce formal definitions of the separation property and fading memory in terms of functional norms, and establish their connection to well-known stability notions for time-delay systems as incremental input-to-state stability. For a class of linear reservoirs, we derive an explicit lower bound for the separation distance via Fourier analysis, offering a computable criterion for reservoir design. Numerical results on the NARMA10 benchmark and continuous-time system prediction validate the approach with a minimal digital implementation.
\end{abstract}

\section{Introduction}

Reservoir Computing (RC) is a machine learning framework inspired by Recurrent Neural Networks (RNN) that aims to reduce training complexity by fixing the hidden layer, which is then called \emph{reservoir}. RC appeared in the early 2000's with Echo State Networks \cite{jaeger2001echo} and Liquid State Machines \cite{maass2002real} and was shown to provide good performance for tasks with history-dependent inputs. Instead of training all the weights via backpropagation algorithms, the idea of RC is to reduce the hidden layer to a dynamical system with fixed weights. As in traditional neural networks, for given input signals, the outputs are 
obtained through a linear combination of the reservoir states, so that only the 
weights between the reservoir and the output layer are trained. This implies that 
a linear regression suffices to train the model, dramatically reducing 
the computational cost compared to standard RNNs~\cite{Lukosevicius2009Reservoir}.  For this system to achieve good performance despite the lack of training, the computational capacity of the dynamical system needs to be very high.

In our approach, the reservoir is a Time-Delay System (TDS). This type of reservoir was introduced in~\cite{appeltant2012reservoir} and  was shown to achieve performances comparable to state-of-the-art reservoir computing with both electronic and opto-electronic implementations. Subsequently, implementations based on photonic architectures have been performed, see for example~\cite{duport2016fully, larger2017high}. The key advantage of this model lies 
in its physical implementation: while traditional reservoirs require many neurons to 
be implemented physically, the time-delay model only requires a single physical node, 
making it particularly attractive for hardware realization.

The performance of RC systems is commonly characterized through three fundamental 
properties. The \emph{separation property}  refers to the ability of the 
reservoir to map distinct inputs onto distinguishable states. The \emph{fading memory 
property}  ensures that the influence of past inputs decays over time, so that 
the reservoir is not dominated by its initial conditions. Finally, \emph{robustness} 
captures the ability to tolerate noise without overfitting. These properties have been 
central to the RC literature since its inception, 
and their interplay is known to govern the computational capacity of the 
reservoir~\cite{dambre2012information}. In particular, best performance is often 
empirically found when the system operates at the \emph{edge of chaos}, a regime 
balancing separation and stability \cite{legenstein2007edge}. In 2021, strong links 
were pointed out between the spectrum of the linearized system around a fixed point 
and the memory capacity of the reservoir~\cite{koster2021insight}. However, despite this growing body of experimental knowledge, these three properties still 
lack rigorous mathematical foundations.  A notable exception is~\cite{dambre2012information}, which 
quantifies the information processing capacity of dynamical systems in terms of functional spaces. However, this framework does not address the input-output separation, robustness to noise, or fading memory properties from a system-theoretic perspective, nor does it exploit the specific structure of time-delay systems. To the best of our knowledge, no control-theoretic formalization of these properties has been established for time-delay reservoir computers. This paper addresses this gap by connecting reservoir computing properties to well-established concepts from control theory thereby providing both a rigorous foundation and concrete design criteria for time-delay reservoirs. The main contributions are as follows:
\begin{itemize}
    \item We introduce {formal definitions} of the separation property and 
    fading memory in terms of functional norms, grounding these notions within the 
    framework of infinite-dimensional dynamical systems.
    \item We establish a {connection between fading memory and incremental 
    input-to-state stability} ($\delta$ISS).
    \item For a class of {linear reservoirs with a single delay}, we derive an 
   {explicit lower bound for the separation distance} via Fourier analysis.
\end{itemize}

\paragraph*{Notation}
Let $n, m$ be positive integers. For any $\Delta>0$, we denote $C \coloneqq C_p([-\Delta, 0), \mathbb{R}^n)$ (resp. $C_p(\mathbb{R}_+, \mathbb{R}^n)$) the set of piecewise continuous functions defined on $[-\Delta,0)$ (resp. $\mathbb{R}_+$) with values in $\mathbb{R}^n$. The set $C$ is endowed with the norm $\|\cdot\|$ such that for any $\varphi \in C$, $\|\varphi\|=\sup_{s\in [-\Delta,0)}\lvert\varphi(s)\rvert$, where $\lvert \cdot \rvert$ is the Euclidean norm. For all $u \in C_p(\mathbb{R}_+, \mathbb{R}^n)$, $
\|u \|_{[0,t]} = \sup_{s \in [0,t]} \lvert u(s) \rvert.$ In the article, we will consider reservoirs based on retarded time-delay systems of the form
\begin{equation}  \label{eq gen}
        \dot{x}(t) = f(x_t, u(t)),\quad x_0= \phi,
\end{equation}
where $x_t$ refers to the history function $x_t\colon s \longmapsto x(t+s)$ for $s \in [-\Delta, 0)$, where $\Delta > 0$ is given. The initial conditions are taken from the set $C$. The inputs are taken in $U \coloneqq L^\infty \cap C_p(\mathbb{R}_+, \mathbb{R}^m)$ and the state $x$ take values in $\mathbb{R}^n$. Conditions on $f$ guaranteeing the well-posedness of~\eqref{eq gen} and the existence of solutions can be found in~\cite{hale2013introduction}. Given an input $u$ and an initial condition $\phi \in C$, the solution of~\eqref{eq gen} at time $t$ will be denoted by $x^{u}(\phi)(t)$ (or $x(t)$ if no confusion arises).


\section{Desired properties for a reservoir} \label{desired prop}

In this section we summarize the properties that are essential for a reservoir computer to achieve good performance. 

The \emph{generalization property} is the ability of the model, after being trained on input-output pairs, to approximate the corresponding outputs of unseen inputs. This property mainly depends on the richness of the dynamics and the optimization of the weights. As our focus is on the impact of the system parametrization, training aspects will not be developed in this work (see \cite{appeltant2012reservoir, hoerl1970ridge} for more details on training).

The capability of the reservoir to map distinct inputs into distinct outputs, even for very close signals, is called the \emph{separation property} (SP).
The SP is linked to the complexity of the reservoir dynamics~\cite{dambre2012information}, which explains why nonlinear functions have always been a key issue in neural networks. In our study, the richness of the reservoir is also captured by the infinite-dimensional state space inherent to TDS. Since input signals are typically corrupted by noise, a complementary 
property is required: \emph{robustness}. A robust reservoir interprets small difference between inputs as noise, thereby 
avoiding overfitting \cite{bishop1995training}. The contradictory aspect of these two properties implies that a trade-off must be found. 

The next property, specific to RC, is the \emph{fading memory} property 
\cite{maass2002real}. It measures how much the system “forgets” the past and requires that the influence of past inputs decays over time, 
so that the current state reflects recent inputs more than distant ones. This is 
especially critical in TDS-based reservoirs, where past information is reinjected into the system, implying that past noise accumulates with current noise. Moreover, as it is technologically demanding on real setups to prevent the devices from noise when initializing the system, we want the reservoir to ignore the information that is too influenced by its initial state. 

So far, the analysis of these properties has been mostly experimental and qualitative, with comparisons for different sets of parameters (including the choice of the nonlinearity) being done after the system is trained and the estimated outputs are computed~\cite{appeltant2012reservoir}. The present work aims to provide a more rigorous mathematical foundation for this analysis.

\section{Time-delay reservoir computing}  \label{RC SAR}

In this section, we provide more details on the different components of time-delay reservoirs.

\subsection{Time-delay systems as reservoirs}
The reservoir we consider is a dynamical system of the form \eqref{eq gen}, $u$ being the signal to be processed.  TDS are particularly well-suited 
for reservoir computing for two reasons. First, the retarded feedback term produce highly complex dynamics. Indeed, re-injecting past information drastically increases the capability of the system to mitigate the inputs, which leads to good separation and generalization~\cite{appeltant2012reservoir}. Therefore, even weak nonlinearities can achieve good performance. Secondly, the retarded feedback term strengthens the short-term memory, resulting in good computational capacity \cite{dambre2012information}. Furthermore, the 
time-delay architecture requires only a single physical node, making it 
attractive for hardware implementations \cite{larger2017high}.

\subsection{Input driving}

For the system to map the input into a high-dimensional phase space, the reservoir response is sampled multiple times, creating ``temporal'' dimensions rather than increasing the number of nodes. The sample points are called \emph{virtual nodes}, as they effectively contribute to the dimensionality of the response. 
To reproduce this process with only one node, the input signal undergoes two pre-processing steps before being injected in the system. The first one, called \emph{time multiplexing}, consists in a sample-and-hold operation 
applied to the input~$u$, producing a piecewise constant function $I$ with step 
duration $T$, called the \emph{clock cycle}. The second step, called \emph{masking procedure}, consists in multiplying the input stream by a piecewise constant $T$-periodic function, called \emph{mask}, with step duration $\theta$. The mask plays the same role as the weight matrix that usually feeds the input vector to the $N = T/\theta$ \emph{virtual nodes} with different scaling factors. 

\subsection{Timescales}
The setup involves three timescales that must be carefully tuned. The first one is the intrinsic time constant $\theta_0$ of the node. It is the time that the system takes to respond to a perturbation. 
For systems whose trajectories decay at an exponential rate, this notion is easy to define: if the solutions decay like $e^{-\frac{t}{\tau}}$, the time constant is $\tau$. The virtual node spacing $\theta$ 
should be chosen so that consecutive node responses are sufficiently 
decorrelated, and the clock cycle $T = N\theta$ should match the delay $\tau$ 
to maximize memory.  Note that resonance effects have been observed when $T$ is too close to a multiple of $\tau$, resulting in memory degradation and thus in a loss of accuracy. This phenomenon is detailed in \cite{koster2021insight}. All these constraints lead to $\theta \lesssim \theta_0 \ll T \sim \tau.$
The last remaining timescale is the quantization step. If it is too small, the complexity will be high, and if it is too large, quantization noise will reduce accuracy.
If the input is in discrete time, there is no quantization step: the values are simply held during a time $T$.

\section{Framework for reservoir analysis}  \label{math}

We now formalize the properties introduced in Section~\ref{desired prop} 
as analytical properties of dynamical systems. 

\subsection{Stability}
It has already been clarified that stability is a necessary condition for any meaningful prediction. Nevertheless, if the convergence rate is too strong, 
it causes the reservoir 
to lose the input information too quickly. For these reasons, the convergence rate must be tuned. In the linear case, stability can be assessed via the spectral abscissa~\cite{amrane2018qualitative,Michiels2014Stability}, as we will discuss in Section~\ref{lin}. For nonlinear systems, the general framework to study the stability of a TDS depending on an input stream is the \emph{Input-to-State Stability} (ISS)  \cite{yeganefarinput,chaillet2023iss}. The following result, taken from \cite{chaillet2023iss}, characterizes this concept through the formalism of Lyapunov--Krasovskii functionals.
\begin{proposition}
System \eqref{eq gen} is ISS if and only if there exist a functional $V \colon C \rightarrow \mathbb{R}$ Lipschitz on bounded sets, $\alpha_1, \alpha_2, \alpha_3, \gamma \in \mathcal{K}_\infty$ such that for every $\phi \in C$ and every $u \in U$, the corresponding solution $x$ satisfies
\begin{subequations}
    \begin{align}
        & \alpha_1(|x(t)|)  \le V(x_t) \le \alpha_2(\|x_t\|), \label{LKF-candidate}\\
        & D^{+}V(x_t) \le -\alpha_3\big(V(x_t)\big) + \gamma(\|u\|_{[0,t]}). \label{dissipation}
    \end{align}
\end{subequations}
\end{proposition}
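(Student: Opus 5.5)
The plan is to prove the two implications separately, following the converse Lyapunov--Krasovskii theory for retarded systems. The sufficiency direction is a comparison argument. The necessity direction is a converse Lyapunov construction, and it is where the real work lies. Throughout, ISS means that there exist $\beta\in\mathcal{KL}$ and $\gamma_0\in\mathcal{K}_\infty$ such that
\begin{equation*}
|x(t)|\le\beta(\|\phi\|,t)+\gamma_0(\|u\|_{[0,t]})
\end{equation*}
for every solution.

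\emph{Sufficiency.} Assume a functional $V$ satisfies \eqref{LKF-candidate}--\eqref{dissipation}. Fix $t$ and put $v(s)=V(x_s)$ for $s\in[0,t]$. On $[0,t]$ the bound $\|u\|_{[0,s]}\le\|u\|_{[0,t]}$ holds, so
\begin{equation*}
D^+v\le-\alpha_3(v)+\gamma(\|u\|_{[0,t]}).
\end{equation*}
A standard scalar comparison lemma for Dini derivatives (Lakshmikantham--Leela) then gives a $\tilde\beta\in\mathcal{KL}$ with
\begin{equation*}
v(t)\le\tilde\beta\bigl(v(0),t\bigr)+\alpha_3^{-1}\bigl(2\gamma(\|u\|_{[0,t]})\bigr).
\end{equation*}
To see this, split according to whether $v\ge\alpha_3^{-1}(2\gamma(\cdot))$. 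In that region $D^+v\le-\tfrac12\alpha_3(v)$, which yields $\mathcal{KL}$ decay. Once $v$ falls below this level, it cannot exceed it again. Combining with $\alpha_1(|x(t)|)\le v(t)$ and $v(0)\le\alpha_2(\|\phi\|)$, and using the weak triangle inequality $\alpha_1^{-1}(a+b)\le\alpha_1^{-1}(2a)+\alpha_1^{-1}(2b)$, we obtain the ISS estimate. Note that \eqref{LKF-candidate} only bounds $V$ below by the pointwise value $|x(t)|$, which matches the ISS estimate on $|x(t)|$ exactly. No Razumikhin-type argument is needed.

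\emph{Necessity.} Assume the system is ISS. I would first pass to the robust-stability reformulation, following the Sontag--Wang route adapted to delay systems by Karafyllis, Pepe and Jiang. The goal is a $\rho\in\mathcal{K}_\infty$ such that the system with inputs restricted to $|u(t)|\le\rho(|x(t)|)$ (or $\rho(\|x_t\|)$) is uniformly globally asymptotically stable, uniformly in such $u$. The standard converse theorem for uniformly GAS retarded systems with disturbances then produces a Lipschitz-on-bounded-sets functional $W$. It satisfies the sandwich bounds in $\|x_t\|$ and decreases along solutions whenever the input obeys the $\rho$-constraint. From this one obtains an implication-form ISS Lyapunov functional: $\|x_t\|\ge\chi(|u|)$ implies $D^+W\le-\alpha(\cdot)$.

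Two further steps convert this into the stated form. First, the lower bound in \eqref{LKF-candidate} uses the pointwise quantity $|x(t)|$, and the decay must be expressed through $V$ itself. I would handle this by rescaling, $V=\sigma\circ W$ with a suitable $\sigma\in\mathcal{K}_\infty$, and by invoking the point-wise dissipation results of Chaillet et al. These show that a decrease in terms of $|x(t)|$ can be upgraded, since for retarded systems ISS with pointwise dissipation is equivalent to ISS. Second, the implication form must be turned into the dissipation form \eqref{dissipation}. This is the usual trick: choose $\gamma$ to dominate $D^+V$ on the set where $\|x_t\|<\chi(|u|)$, using local Lipschitzness of $V$ and the boundedness of $f$ on bounded sets. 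Since $\gamma$ is applied to $\|u\|_{[0,t]}$, which is at least $|u(t)|$, the resulting bound still holds.

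The main obstacle is the converse construction. Building $W$ Lipschitz on bounded sets of the infinite-dimensional space $C$ needs careful sup-over-inputs and inf-convolution constructions. Reconciling the $\|x_t\|$-based sandwich with the pointwise lower bound and decay rate in \eqref{LKF-candidate}--\eqref{dissipation} is exactly the delicate contribution of \cite{chaillet2023iss}. In the paper I would therefore cite that reference for this direction and present the comparison argument in full for sufficiency.
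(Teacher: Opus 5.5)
Your proposal is consistent with the paper, which gives no argument of its own and takes the whole equivalence from \cite{chaillet2023iss}. Your comparison-lemma proof of sufficiency is the standard one, and for necessity you likewise rely on the converse Lyapunov--Krasovskii theorem of that reference; the detour through point-wise dissipation results is superfluous, since those results concern the sufficiency direction, while the converse functional $W$ you describe already satisfies $\alpha_1(\|x_t\|)\le W(x_t)\le\alpha_2(\|x_t\|)$ and has a decay rate $\alpha(\|x_t\|)\ge\alpha\bigl(\alpha_2^{-1}(W(x_t))\bigr)$, so \eqref{LKF-candidate} and \eqref{dissipation} follow directly.
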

A functional Lipschitz on bounded sets that satisfies \eqref{LKF-candidate} is called a Lyapunov--Krasovskii Functional (LKF) candidate. 

\subsection{Separation property}
Inspired by the definition given in~\cite{appeltant2012reservoir}, we quantify the SP by the $L^2$ norm of the state difference induced by two 
distinct inputs $u$ and $v$ sharing the same initial condition $\phi$:
\begin{align}
    S_{t_0, t_1, \phi}(u,v) \textstyle \coloneqq \| x^{u}(\phi) - x^{v}(\phi)\|_{L^2([t_0, t_1])}.
\end{align}

To obtain a quantity independent of the specific input pair and initial condition, 
we average over a set $F \subset C$ of initial conditions and over pairs $(u,v) \in E \subset U \times U$ satisfying $\big|\|u-v\|_{L^2} - d\big| \simeq \delta$, where $d$ is a characteristic distance and $\delta$ a precision~\cite{maass2002real}:
\begin{equation}
    \textstyle S_{d, t_0, t_1} \coloneqq \frac{1}{|E||F|} \sum_{(u,v) \in E} \sum_{\phi \in F} S_{t_0, t_1, \phi}(u,v).
\end{equation}

We expect this quantity to be large enough with respect to $\|u-v\|_{L^2}$, ensuring injectivity for the input-to-state map. A lower bound for $S_{d,t_0,t_1}$ in the linear single-delay case is derived in Section~\ref{lin}. The $L^2$ norm is chosen here for the 
orthogonality of Fourier modes, which is exploited in Section~\ref{lin}. Other choices of $L^p$ norms can be relevant too. When computing the separation from a physical setup, $t_0$ is chosen large enough for the transient due to initial conditions to have decayed. 

So far, the SP has been assessed empirically by running a large number of simulations and checking the rank of the state matrix~\cite{appeltant2012reservoir}. The $L^2$-based criterion $S_{d,t_0,t_1}$ proposed here offers a more principled alternative, as it involves the full trajectory rather than a finite collection 
of samples. From a system-theoretic perspective, requiring a uniform lower bound of the form $S_{d,t_0,t_1} \ge C d$ for all input pairs amounts to asking for \emph{exact observability} of the reservoir, i.e., the existence of a constant $C > 0$ such that
\begin{equation}
    \|\mathcal{T}u - \mathcal{T}v\|_{L^2} \geq C \|u - v\|_{L^2},
\end{equation}
where $\mathcal{T} \colon u \mapsto x^u$ denotes the input-to-state operator. 
For linear systems however, $\mathcal{T}$, the variation-of-constants formula~\cite{hale2013introduction} implies that no such uniform bound can hold over the full input space. Intuitively, high-frequency input components are attenuated by the reservoir dynamics, producing a \emph{filtering effect} that prevents separation of rapidly oscillating inputs, a phenomenon consistent with the robustness requirement of 
Section~\ref{desired prop}. A natural remedy is to restrict the analysis to inputs within a finite-dimensional frequency band on which a lower bound of $\mathcal{T}$ can be recovered. This motivates the frequency-limited analysis of Section~\ref{lin}.

Finally, this injectivity-related problem echoes the design of \emph{Kazantzis--Kravaris--Luenberger} 
(KKL) observers \cite{bernard2022observer}, where 
injectivity of a state-space immersion is recovered by increasing the dimension  of the target space. In reservoir computing, adding virtual nodes plays the same role. Interestingly, connections between KKL observers and machine learning have recently been drawn \cite{janny2021deep}, suggesting that a reservoir can be 
interpreted as a data-driven approximation of a KKL immersion. From this perspective, a TDS reservoir offers an \emph{intrinsically infinite-dimensional} immersion space, which should in principle provide richer separation guarantees than any finite-dimensional architecture. Whether this structure can be systematically exploited to enforce injectivity over a prescribed input class is an open question.

\subsection{Robustness to noise}
A robust reservoir should attenuate the effect of small input perturbations on 
the state. To deal with noise, we want to have some control on the state difference corresponding to two slightly different inputs. This can be formalized as 
\begin{align}
     |x^{u}(\phi)(t) - x^{v}(\phi)(t)| \leq \gamma({\|u-v\|_{[0,t]}}),\label{ineq_noise}
     \end{align}
with $\gamma$ a positive and non-decreasing function and where $v$ can be a noisy version of $u$. We expect $\gamma$ to be slowly increasing since, in that way, the state difference is relatively small compared to the input difference. Equation~\eqref{ineq_noise} is precisely the trade-off with the SP: a highly sensitive reservoir separates inputs well but  amplifies noise, while an overly robust one fails to distinguish genuine input 
differences. The only way to distinguish input separation from noise is to have at least a rough idea of the shape of the noise. In our analysis, we consider that high-frequency 
components of a signal can be attributed to noise \cite{bishop1995training}. Consequently, we require the reservoir to act as a low-pass filter, attenuating 
high-frequency inputs while preserving low-frequency content.

\subsection{Fading memory}
Beyond input noise, the initial condition of the reservoir carries no useful 
information about the current input and should be progressively forgotten. The initial state can indeed be strongly affected by previous entries (as it may be hard on physical setups to reinitialize the system between two inputs). This implies that we also need to be robust with respect to initial conditions. More precisely, for any $u \in U$, for any $\phi , \psi \in C$, we need $|x^{u}(\phi)(t) - x^{u}(\psi)(t)|$ to be continuously upper-bounded by a term depending on $\|\phi - \psi\|$, and this term shall be decreasing with respect to time. The Incremental Input-to-State Stability ($\delta$ISS)  unifies robustness 
to noise and fading memory into a single notion that can be characterized through sufficient conditions. 

\begin{definition}[Incremental Input-to-State Stability]
The system \eqref{eq gen} is said to be \emph{incrementally input-to-state stable ($\delta$ISS)} if there exist functions $\beta \in \mathcal{KL}$ and $\gamma \in \mathcal{K}_\infty$ such that, for all $\phi, \psi \in C$, {for all $u,v \in U$, and} for all $t \ge 0$,
\[
  \lvert x^{u}(\phi)(t) - x^{v}(\psi)(t)\rvert \le  \beta\left(\|\phi - \psi\|,t\right) + \gamma\!\left(\|u-v\|_{[0,t]}\right).  
\]
\end{definition}

The following theorem provides a Lyapunov sufficient condition for $\delta$ISS. The proof follows the classical methodology of ISS results \cite{chaillet2023iss} but we provide a self-contained proof adapted to this setting.

\begin{theorem}
If there exist a functional $V\colon C \times C \rightarrow [0, +\infty)$ {Lipschitz on bounded sets}, $\alpha_1, \alpha_2 \in \mathcal{K}_\infty$, $\sigma \in \mathcal{K}$ and $\kappa > 0$ such that for any inputs $u, \tilde u \in U$ and any $x_0, \tilde x_0 \in C$, the associated solutions of~\eqref{eq gen} (respectively denoted here by $x$ and $\tilde x$)  verify for all $t\geq 0$
    \begin{align}
    \alpha_1(|x(t)-\tilde{x}(t)|)  \le V(x_t,\tilde{x}_t) \le \alpha_2(\|x_t-\tilde{x}_t\|) \label{dISS1}, \\
    D^+ V(x_t,\tilde{x}_t) \le -\kappa V(x_t,\tilde{x}_t) + \sigma\big(\|u-\tilde{u}\|_{[0,t]}\big) \label{dISS2},
    \end{align}
then system \eqref{eq gen} is $\delta$ISS.
\end{theorem}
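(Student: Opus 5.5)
The plan is to fix $u,\tilde u\in U$ and $x_0,\tilde x_0\in C$, and set $v(t)\coloneqq V(x_t,\tilde x_t)$. We integrate the differential inequality \eqref{dISS2} by a comparison argument, and then translate the result back through the sandwich bounds \eqref{dISS1}.

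\emph{Step 1: continuity of $v$.} Solutions of \eqref{eq gen} are continuous in $t$, so $t\mapsto x_t$ is continuous into $C$ wherever the trajectory lives; on each compact time interval the pair $(x_t,\tilde x_t)$ stays in a bounded set. Since $V$ is Lipschitz on bounded sets, $v$ is continuous, and in particular locally Lipschitz, on $[0,+\infty)$. One caveat: if the history functions jump because $\phi$ is only piecewise continuous, we work on $t\ge\Delta$ or accept piecewise continuity. The comparison lemma still applies with right upper Dini derivatives.

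\emph{Step 2: comparison.} Fix $t>0$ and set $c\coloneqq\sigma(\|u-\tilde u\|_{[0,t]})$. Because $s\mapsto\|u-\tilde u\|_{[0,s]}$ is nondecreasing, \eqref{dISS2} gives, for all $s\in[0,t]$,
\[
D^+v(s)\le-\kappa v(s)+c.
\]
Let $w(s)\coloneqq e^{\kappa s}\bigl(v(s)-c/\kappa\bigr)$. Then $D^+w(s)=e^{\kappa s}\bigl(D^+v(s)+\kappa v(s)-c\bigr)\le 0$. A continuous function with nonpositive right upper Dini derivative is nonincreasing; this is the standard Dini-derivative monotonicity lemma, which I will invoke. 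Hence
\[
v(t)\le e^{-\kappa t}v(0)+\frac{c}{\kappa}\bigl(1-e^{-\kappa t}\bigr)\le e^{-\kappa t}\alpha_2(\|x_0-\tilde x_0\|)+\frac{1}{\kappa}\sigma\bigl(\|u-\tilde u\|_{[0,t]}\bigr).
\]
This step is the main technical point. One must handle the Dini derivative properly, which only requires continuity of $v$, and freeze the input bound at the final time $t$ so that the constant $c$ is legitimate on all of $[0,t]$.

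\emph{Step 3: back to the state.} By the lower bound in \eqref{dISS1} and the weak triangle inequality $\alpha_1^{-1}(a+b)\le\alpha_1^{-1}(2a)+\alpha_1^{-1}(2b)$, valid since $\alpha_1^{-1}\in\mathcal K_\infty$,
\[
|x(t)-\tilde x(t)|\le\alpha_1^{-1}\bigl(2e^{-\kappa t}\alpha_2(\|x_0-\tilde x_0\|)\bigr)+\alpha_1^{-1}\bigl(\tfrac{2}{\kappa}\sigma(\|u-\tilde u\|_{[0,t]})\bigr).
\]
Set $\beta(r,t)\coloneqq\alpha_1^{-1}\bigl(2e^{-\kappa t}\alpha_2(r)\bigr)$. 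This is class $\mathcal K$ in $r$ and decreases to $0$ in $t$, so $\beta\in\mathcal{KL}$.

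The gain $\alpha_1^{-1}\bigl(\tfrac{2}{\kappa}\sigma(\cdot)\bigr)$ is only of class $\mathcal K$, since $\sigma\in\mathcal K$. We therefore dominate it by some $\gamma\in\mathcal K_\infty$, for instance
\[
\gamma(r)\coloneqq\alpha_1^{-1}\bigl(\tfrac{2}{\kappa}\sigma(r)\bigr)+r.
\]
With these choices the bound is exactly the one required in the definition of $\delta$ISS, so system \eqref{eq gen} is $\delta$ISS. The case $t=0$ follows directly from \eqref{dISS1}.
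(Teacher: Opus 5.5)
Your proof is correct and follows the paper's argument. You integrate the dissipation inequality with a frozen-constant comparison on $e^{\kappa s}\bigl(v(s)-c/\kappa\bigr)$ where the paper uses Gr\"onwall and then bounds the integral, and you then split through $\alpha_1^{-1}$ with the factor $2$ to get the same $\beta$. Your padding $\gamma(r)=\alpha_1^{-1}\bigl(\tfrac{2}{\kappa}\sigma(r)\bigr)+r$ also repairs a small slip in the paper, whose gain is only of class $\mathcal K$ when $\sigma\in\mathcal K$.

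One caution on Step~1. The Dini monotonicity lemma does not survive mere piecewise continuity: $\mathbf{1}_{[1,\infty)}$ has $D^+\equiv 0$ yet increases. Restricting to $t\ge\Delta$ also fails, because it leaves $v(\Delta)$ uncontrolled by $\|x_0-\tilde x_0\|$. So, as the paper tacitly does, you should take continuity of $t\mapsto V(x_t,\tilde x_t)$ as a standing assumption; it holds, for instance, for initial data continuous on $[-\Delta,0]$.
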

\begin{proof}
By \eqref{dISS2} and Grönwall's lemma,
\begin{align*}
 V(x_t,\tilde{x}_t)
&\textstyle\le e^{-\kappa t} V(x_0,\tilde{x}_0) + e^{-\kappa t}\int_0^t e^{\kappa s}\, \sigma\big(\|u-\tilde{u}\|_{[0,s]}\big)\,ds \\
& \le e^{-\kappa t} V(x_0,\tilde{x}_0) + \tfrac{1}{\kappa}\sigma\big(\|u-\tilde{u}\|_{[0,t]}\big).
\end{align*}

Using \eqref{dISS1} for $t=0$,
\[V(x_t,\tilde{x}_t) \le e^{-\kappa t} \alpha_2(\|x_0 - \tilde{x}_0 \|) + \tfrac{1}{\kappa}\sigma\big(\|u-\tilde{u}\|_{[0,t]}\big).\]
Then, using the 
left-hand side  of \eqref{dISS1} and the fact that the inverse function  of a $\mathcal{K}_\infty$ function is in $\mathcal{K}_\infty$ as well,
\begin{align*}
 |x(t) -\tilde{x}(t)|
& \le \beta \big( \|x_0 - \tilde{x}_0 \| , t \big) + \gamma(\|u-\tilde{u}\|_{[0,t]})
\end{align*}
for $\beta ( x, t) = \alpha_1^{-1} \big(2e^{-\kappa t} \alpha_2(x) \big)$, $\gamma(x) = \alpha_1^{-1} \big( \frac{2}{\kappa}\sigma(x) \big)$.
\end{proof}


The connection between $\delta$ISS and reservoir computing is the central contribution of this framework. The $\mathcal{KL}$ term $\beta$ captures  \emph{fading memory}: it must decay fast enough for the initialization transient to vanish before the first readout, yet slowly enough to preserve memory of recent 
inputs. The $\mathcal{K}_\infty$ term $\gamma$ captures \emph{robustness}: it  should grow slowly so that noise is attenuated without masking genuine input  differences. This makes the separation/robustness trade-off explicit at the  functional level: stronger contraction accelerates the decay of $\beta$ at the  cost of memory, while higher sensitivity improves the SP but steepens $\gamma$.  Finally, while finding a suitable Lyapunov--Krasovskii functional $V$ can be challenging for nonlinear reservoirs, standard constructive methodologies (such as Linear Matrix Inequalities (LMIs)) are well-established in the control literature \cite{fridman2014introduction,karafyllis2011stability,pepe2006lyapunov}.

\section{Case of a linear reservoir} \label{lin}

Although practical reservoirs rely on nonlinear dynamics, the linear case already provides valuable theoretical insight and is not without a computational interest. Indeed, while the state space of \eqref{retarded lin} is finite-dimensional in the classical sense, the solution operator acts on the infinite-dimensional space $C$, so that the reservoir response retains a rich structure even in the absence of nonlinearity \cite{hale2013introduction}. This stands in sharp contrast with linear reservoirs based on ODEs, whose computational capacity is fundamentally limited \cite{dambre2012information}. Furthermore, the linear case serves as a stepping stone toward the nonlinear setting: local behavior near a fixed point can be analyzed via linearization, and the spectral properties of the linearized system have been shown to strongly influence the memory capacity of the reservoir \cite{koster2021insight}. Linear reservoirs are also directly relevant for tasks with low nonlinearity requirements such as NARMA10 \cite{appeltant2012reservoir}, and can be combined with a nonlinear readout to increase expressivity \cite{Lukosevicius2009Reservoir}.

\begin{proposition}  \label{cv rate} \cite{hale2013introduction}
{Let $\ell$ be a positive integer, $0 < \tau_1 < \tau_2 < ... < \tau_l$ be some positive numbers, and $A_i$, $i=0,...,l$ some matrices in $\mathcal{M}_{n \times n}(\mathbb{R})$.} Consider the system
\begin{equation}  \label{retarded lin}
    \textstyle\dot{x}(t) = A_0 x(t) + \sum_{j=1}^{\ell} A_j x(t-\tau_j)
\end{equation}
and its characteristic matrix~\cite{hale2013introduction} given by
\[
\textstyle \Delta(z) = z I - A_0 - \sum_{j = 1}^\ell A_j e^{-z \tau_j}, \qquad z \in \mathbb C.
\]
Setting $s_0 = \sup\{\Re(z) : \det \Delta(z) = 0\}$, then for any ${p} > s_0$, there exists a constant $M = M({p})$ such that, for all initial conditions $\phi$, the solution $x(\phi)$ of system \eqref{retarded lin} satisfies the exponential estimate: $
|x(\phi)(t)| \le M e^{{p} t} \|\phi\|,~ t \ge 0. $
\end{proposition}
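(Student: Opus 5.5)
The plan is the classical Laplace-transform argument via the fundamental matrix, as in \cite{hale2013introduction}. Let $\tau \coloneqq \tau_\ell$ be the largest delay and $\Delta = \tau$ the history length. First, existence and uniqueness of a continuous solution on $[0,\infty)$ follow from the method of steps. On each interval $[k\tau,(k+1)\tau]$ the delayed terms are already known, so \eqref{retarded lin} reduces to a linear ODE with piecewise continuous forcing. Grönwall's lemma on $[0,t]$ then gives the crude bound
\[
|x(\phi)(t)| \le (1+\textstyle\sum_j |A_j|\tau)\,\|\phi\|\, e^{\rho t}, \qquad \rho = \textstyle\sum_{j=0}^\ell |A_j| .
\]
This exponential-order bound justifies taking Laplace transforms for $\Re z > \rho$.

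Next, introduce the fundamental matrix $X(t)$, the solution with $X(0)=I$ and $X(t)=0$ for $t<0$. Its Laplace transform is $\hat X(z) = \Delta(z)^{-1}$ for $\Re z$ large. Transforming \eqref{retarded lin} then yields the variation-of-constants formula
\[
x(\phi)(t) = X(t)\phi(0) + \textstyle\sum_{j=1}^\ell \int_{-\tau_j}^{0} X(t-s-\tau_j) A_j \phi(s)\,ds .
\]
It follows that $|x(\phi)(t)| \le \big(1+\sum_j |A_j|\tau_j\big)\,\|\phi\| \sup_{0\le r\le t}|X(r)|$ up to the obvious bookkeeping. It therefore suffices to show $|X(t)| \le K e^{p t}$ for every $p>s_0$.

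The main step is the bound on $X$, obtained by inverting the Laplace transform and shifting the contour. Write $X(t) = \frac{1}{2\pi i}\int_{c-i\infty}^{c+i\infty} e^{zt}\Delta(z)^{-1}dz$ with $c$ large. Because $X$ has a jump at $0$, split $\Delta(z)^{-1} = \frac{1}{z} I + R(z)$ with
\[
R(z) = \Delta(z)^{-1}\big(A_0+\textstyle\sum_j A_j e^{-z\tau_j}\big)\tfrac{1}{z}.
\]
On any half-plane $\Re z \ge p$ the factor $A_0+\sum_j A_je^{-z\tau_j}$ is bounded, and $\Delta(z)^{-1} = z^{-1}(I + O(1/z))^{-1}$ for $|z|$ large. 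Hence $R(z)=O(|z|^{-2})$ there, which is integrable on vertical lines. We also need that $\det\Delta$ has only finitely many zeros in any strip $p\le \Re z\le c$. This holds because $|z|\to\infty$ forces $\Delta(z)/z \to I$, and zeros of a nonzero entire function are isolated. By the definition of $s_0$ there are no zeros with $\Re z>s_0$, so $\Delta^{-1}$ is analytic on $\Re z \ge p$.

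We then move the line of integration from $c$ to $p$. The horizontal segments vanish by the $O(|z|^{-2})$ decay. This gives $|\frac{1}{2\pi i}\int_{p} e^{zt}R(z)dz| \le C e^{pt}$. The term $\frac1z$ inverts to $I$ for $t>0$, and we handle it by keeping the identity $X(t) = I + \int_0^t(A_0X(r)+\sum_j A_jX(r-\tau_j))dr$. Alternatively, we apply the contour argument to $e^{-pt}X$ after an integration by parts, as Hale does. The delicate points, which I expect to be the main obstacle, are the uniform decay estimate of $R$ on the shifted line and the handling of the non-integrable $1/z$ part. Once $|X(t)|\le K e^{pt}$ is established, the variation-of-constants formula gives the claim with $M = K(1+\sum_j|A_j|\tau_j)e^{|p|\tau}$.
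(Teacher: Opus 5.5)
The paper gives no proof of its own and only cites \cite{hale2013introduction}. Your fundamental-matrix, Laplace-transform and contour-shift route is the classical one from that reference. The reduction to a bound on $X$ via the variation-of-constants formula is fine, provided the supremum of $|X|$ is taken over $[\max(0,t-\tau),t]$ rather than $[0,t]$; your final constant $e^{|p|\tau}$ suggests you intend this.

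There is a genuine gap at the step you flagged yourself, the $z^{-1}$ term. It matters exactly in the regime the paper needs, $p\le 0$, since Proposition~\ref{Prop_Sep} uses $p=s_0+\epsilon<0$. Once $p\le 0$, your remainder $R(z)=\Delta(z)^{-1}-z^{-1}I$ is \emph{not} analytic on $\Re z\ge p$. Because $s_0<p\le 0$ forces $\det\Delta(0)\neq 0$, $R$ has a simple pole at $z=0$, and $e^{zt}R(z)$ has residue $-I$ there. Moving the line of integration for $R$ from $c$ to $p$ therefore produces an extra $-I$, which you drop. Your statement that $z^{-1}$ inverts to $I$ on $\Re z=c$ is correct. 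But combined with the residue-free shift, your argument as written only yields $X(t)=I+O(e^{pt})$, which does not decay. Falling back on the identity $X(t)=I+\int_0^t(\cdots)\,dr$ does not remove this constant. The appeal to an integration by parts ``as Hale does'' is not spelled out.

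The repair is short, and either of two options works.
\begin{enumerate}
\item Keep track of the residue. For $p<0$ it cancels the $I$ exactly, giving $X(t)=\frac{1}{2\pi i}\int_{p-i\infty}^{p+i\infty}e^{zt}R(z)\,dz$ for $t>0$, and hence $|X(t)|\le Ke^{pt}$.
\item Split instead as $\Delta(z)^{-1}=(z-\gamma)^{-1}I+R_\gamma(z)$ with a real $\gamma<p$. Here $R_\gamma(z)=\Delta(z)^{-1}\big(B(z)-\gamma I\big)(z-\gamma)^{-1}$ and $B(z)=A_0+\sum_j A_je^{-z\tau_j}$. Now $R_\gamma$ is analytic, bounded and $O(|z|^{-2})$ on the whole half-plane $\Re z\ge p$. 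Your contour shift then applies to it verbatim and contributes at most $Ce^{pt}$. Meanwhile $(z-\gamma)^{-1}I$ inverts to $e^{\gamma t}I$, whose norm is at most $e^{pt}$. This version needs no case distinction on the sign of $p$.
\end{enumerate}
With either change, the rest of your proof goes through.
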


The number $s_0$ is called the spectral abscissa of~\eqref{retarded lin}. For reservoir design, tuning $s_0$ is critical: too negative, and the reservoir loses memory of past inputs; too close to zero, and the fading memory property is compromised. In the scalar single delay-case, the roots of the characteristic matrix can be expressed in terms of the $W$-Lambert function~\cite{corless1996lambert}. Explicit algebraic conditions to assign the spectral abscissa of a scalar linear TDS can be found in the literature for specific configurations~\cite{Boussaada2022Generic, Schmoderer2024Insights}.

Next, we derive an explicit lower bound for the separation property in a single-delay linear reservoir. Consider $\tau>0$, $a_0 \in \mathbb{R}$ and $a_1 \in \mathbb{R}$. Consider the linear system
\begin{equation} \label{1d}
        \dot x(t)  = a_0 x(t) + a_1 x(t-\tau) + u(t),
\end{equation}
with the initial condition $\phi \in C_p([-\tau,0),\mathbb{R})$ and $u \in C_p(\mathbb{R}_+,\mathbb{R})$. Consider two different inputs $u$ and $v$ and the associated solutions $x^u$ and $x^v$. By linearity, the state difference $z = x^{u} - x^{v}$ is solution of \eqref{1d} with the term $u$ being replaced by $w \coloneqq u-v$ and initial condition $\phi \equiv 0$.

\begin{proposition} \label{Prop_Sep}
Denote by $\alpha_k$, $k \in \mathbb Z$, the coefficients of the Fourier expansion of $w$ on $[0, t_1]$.
Assume that the input-free system $\dot x(t) = a_0 x(t) + a_1 x(t-\tau)$
is asymptotically stable with spectral abscissa $s_0$. Then, for any $\epsilon>0$, there exists some $M_1> 0$ depending on $\epsilon$ such that for every $0<t_0<t_1$, the global separation measure on $[t_0,t_1]$ satisfies
\[
\textstyle \int_{t_0}^{t_1} |z(t)|^2 dt \;\ge\; (t_1-t_0) \big( \sum_{k\in\mathbb{Z}}
\frac{|\alpha_k|^2}{\Delta_k} \, \, - \, \, M_1 e^{(s_0+\epsilon) t_0} \big),
\]
where
\[
\textstyle \Delta_k = \left(a_0 + a_1\cos\frac{2k\pi\tau}{t_1}\right)^2 + \left(\frac{2k\pi \tau}{t_1} + a_1\sin\frac{2k\pi\tau}{t_1}\right)^2.
\]
\end{proposition}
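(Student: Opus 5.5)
The plan is to split $z$ into a periodic steady-state part, computed in the Fourier basis, plus a decaying transient, controlled by Proposition~\ref{cv rate}. Extend $w$ to a $t_1$-periodic function on $\mathbb{R}$ and write $w(t)=\sum_{k\in\mathbb{Z}}\alpha_k e^{i\omega_k t}$ with $\omega_k = 2k\pi/t_1$, normalised so that Parseval reads $\frac{1}{t_1}\int_0^{t_1}|w|^2 = \sum_k |\alpha_k|^2$. Since the input-free system is asymptotically stable, $\Delta(i\omega)\neq 0$ for every real $\omega$. Hence each mode has the exact periodic response
\[
z_p(t)=\sum_{k\in\mathbb{Z}} \frac{\alpha_k}{\Delta(i\omega_k)}\, e^{i\omega_k t}, \qquad \Delta(i\omega)= i\omega - a_0 - a_1 e^{-i\omega\tau}.
\]
Expanding $|\Delta(i\omega_k)|^2$ into real and imaginary parts gives the quantity $\Delta_k$ of the statement. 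I will check the sign and time-normalisation conventions here (the factor $\tau$ multiplying $\omega_k$ in the second bracket) so that the denominator matches the paper's expression. Because $|\Delta(i\omega_k)|$ grows like $|k|$, the series converges in $L^2$, and $z_p$ is a genuine $t_1$-periodic solution of \eqref{1d} driven by $w$. Parseval then gives $\frac{1}{t_1}\int_0^{t_1}|z_p|^2 = \sum_k |\alpha_k|^2/\Delta_k$.

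Next I treat the transient. Set $y = z - z_p$. Then $y$ solves the input-free equation with initial history $-z_p|_{[-\tau,0)}$, because $z$ has zero history. Fix $\epsilon>0$ and apply Proposition~\ref{cv rate} with $p = s_0+\epsilon$ to get
\[
|y(t)|\le M e^{(s_0+\epsilon)t}\|z_p\|_\infty .
\]
Here $\|z_p\|_\infty \le \sum_k |\alpha_k|/|\Delta(i\omega_k)|$, which is finite by Cauchy--Schwarz since $\sum_k 1/\Delta_k<\infty$. On $[t_0,t_1]$ I use the pointwise inequality $|z|^2 \ge |z_p|^2 - 2|z_p||y|$. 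The cross term is then bounded by $2\|z_p\|_\infty^2 M e^{(s_0+\epsilon)t_0}$, using $s_0+\epsilon<0$ (take $\epsilon<-s_0$) so that the exponential is largest at $t_0$. Integrating over $[t_0,t_1]$ gives
\[
\int_{t_0}^{t_1}|z|^2 \ge \int_{t_0}^{t_1}|z_p|^2 - (t_1-t_0)M_1 e^{(s_0+\epsilon)t_0}, \qquad M_1 = 2M\|z_p\|_\infty^2 .
\]

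The main obstacle is replacing $\int_{t_0}^{t_1}|z_p|^2$ by $(t_1-t_0)\sum_k|\alpha_k|^2/\Delta_k$. Parseval is exact only over a full period $[0,t_1]$, whereas the statement integrates over the sub-interval $[t_0,t_1]$. My first attempt is to write
\[
\int_{t_0}^{t_1}|z_p|^2 = t_1\sum_k \frac{|\alpha_k|^2}{\Delta_k} - \int_0^{t_0}|z_p|^2 \ge (t_1-t_0)\sum_k\frac{|\alpha_k|^2}{\Delta_k} + t_0\Big(\sum_k\frac{|\alpha_k|^2}{\Delta_k} - \|z_p\|_\infty^2\Big),
\]
and then try to absorb the last, possibly negative, term into the exponential remainder by enlarging $M_1$. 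If this fails, I would instead average over $t_0$, or interpret the bound for $t_0$ small relative to $t_1$. In any case I expect $M_1$ to depend on $\epsilon$ and also on $w$ through $\|z_p\|_\infty$. I will state this dependence explicitly (for instance, uniformity holds over inputs with bounded $\sum_k|\alpha_k|^2$), since the statement presents $M_1$ as depending on $\epsilon$ only.
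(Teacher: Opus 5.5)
Up to the last step your argument is the paper's own. Its $\tilde z = z + \sum_k \beta_k e^{i\omega_k t}$ is your $y$; your coefficients $\alpha_k/\Delta(i\omega_k)$ carry the correct sign, while the paper's relation for $\beta_k$ has a sign slip that does not affect $|\beta_k|$. The transient is controlled through Proposition~\ref{cv rate}, and the cross term is bounded as you do. Do not try to reconcile $\Delta_k$ with the statement. The proof defines $\Delta_k = |i\omega_k - a_0 - a_1 e^{-i\omega_k\tau}|^2$, whose second bracket is $\omega_k + a_1\sin(\omega_k\tau)$. The extra factor $\tau$ in the statement is therefore a typo, invisible only for $\tau=1$ (the value used in the simulations).

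The gap is precisely the step you flagged, and no argument can close it. The paper crosses it by invoking orthogonality of the Fourier modes on $[t_0,t_1]$. However, the modes $e^{i\omega_k t}$ with $\omega_k = 2k\pi/t_1$ are orthogonal only over windows whose length is a multiple of $t_1$. On $[t_0,t_1]$ one is left with $\sum_{j\neq k}\beta_k\bar\beta_j\,\frac{1-e^{i(\omega_k-\omega_j)t_0}}{i(\omega_k-\omega_j)}$, which can be negative and does not decay in $t_0$. Your absorption fails for the same reason: $t_0\big(\|z_p\|_\infty^2-\sum_k|\alpha_k|^2/\Delta_k\big)$ grows with $t_0$, while $(t_1-t_0)M_1e^{(s_0+\epsilon)t_0}\to 0$ as $t_0\to t_1$.

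In fact the inequality is false for $0<\epsilon<-s_0$. Dividing by $t_1-t_0$ and letting $t_0\to t_1$, it would give $|z(t_1)|^2\ge\sum_k|\alpha_k|^2/\Delta_k - M_1e^{(s_0+\epsilon)t_1}$, i.e.\ a pointwise value dominating a mean square. Take $w(t)=\cos(\omega t+\varphi)$ with $\varphi=\arg\Delta(i\omega)+\pi/2$ and $t_1=2\pi n/\omega$. Only $k=\pm n$ occur, and $\sum_k|\alpha_k|^2/\Delta_k=1/(2|\Delta(i\omega)|^2)$ for every $n$. Moreover $z_p(t)=-\sin(\omega t)/|\Delta(i\omega)|$ vanishes at $t_1$, so $|z(t_1)|=|y(t_1)|\le Me^{(s_0+\epsilon)t_1}/|\Delta(i\omega)|$. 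The inequality therefore fails for large $n$, whatever $M_1$ is. Separately, the scaling $w\mapsto\lambda w$ shows that $M_1$ cannot depend on $\epsilon$ alone.

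What your argument does prove is one of the following:
\begin{itemize}
\item the bound with the additional term $-t_0\big(\|z_p\|_\infty^2-\sum_k|\alpha_k|^2/\Delta_k\big)$, which is informative only when $t_0\ll t_1$;
\item the full-window bound $\int_0^{t_1}|z|^2\ge t_1\sum_k|\alpha_k|^2/\Delta_k-2M\|z_p\|_\infty^2/|s_0+\epsilon|$;
\item for a $t_1$-periodic input, the stated bound on windows $[t_0,t_0+mt_1]$, with $t_1-t_0$ replaced by $mt_1$.
\end{itemize}

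Lastly, to make $M_1$ uniform in $t_1$, use $\|z_p\|_\infty\le\|h\|_{L^1}\|w\|_\infty$, where $h$ is the fundamental solution. Your Cauchy--Schwarz bound contains $\sum_k 1/\Delta_k$, which grows linearly in $t_1$, so bounded $\sum_k|\alpha_k|^2$ does not give uniformity.
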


\begin{proof}
Due to space restriction, we only give a sketch of the proof. We have $\textstyle w(t) = \sum_{k\in\mathbb{Z}} \alpha_k e^{\frac{2 i k \pi t}{t_1}}.$ We then introduce the change of variables
\[
\textstyle\tilde z(t) = z(t) + \sum_{k\in\mathbb{Z}} \beta_k e^{\frac{2 i k \pi t}{t_1}}.
\]
The goal is to choose the $\beta_k$ so that $\tilde z$ satisfies the homogeneous delayed equation $\dot{\tilde z}(t) = a_0 \tilde z(t) + a_1\tilde z(t-\tau).$
This yields $k \in \mathbb Z$,
\[
\beta_k \left( i \omega_k - a_0 - a_1 e^{-i \omega_k \tau} \right) = \alpha_k.
\]
We obtain \(|\beta_k|^2 = \frac{|\alpha_k|^2}{\Delta_k}\), where $\Delta_k = |i \omega_k - a_0 - a_1 e^{-i \omega_k \tau}|^2$. {We now fix $t_0 \in [0, t_1)$ and compute the energy of the state difference by expanding $\int_{t_0}^{t_1} |z(t)|^2 dt=\textstyle\int_{t_0}^{t_1} \left| \tilde z(t) - \sum_{k\in\mathbb{Z}} \beta_k e^{i \omega_k t} \right|^2 dt$.} By Proposition \ref{cv rate}, the homogeneous solution $\tilde z(t)$ satisfies $|\tilde z(t)|\leq M\mathrm{e}^{(s_0 + \epsilon)t}||\tilde z_0||$. Thus, its squared integral and the cross-terms with the bounded periodic signal are subsumed into a bounded decaying term $M e^{(s_0+\epsilon)t_0}$. Finally, exploiting the orthogonality of the Fourier modes on the remaining periodic part over $t \ge t_0$ yields the $(t_1-t_0)\sum_k |\beta_k|^2$ term, concluding the proof.
\end{proof}

By neglecting the transient term $\tilde z$, we obtain a metric that is independent of the initial condition.

As mentioned in Section~\ref{math}, separation due to noise should be excluded from the bound. Assuming that frequency components with index $|k| > k_0$ arise from noise, a good reservoir should maximize $\sum_{|k| \le k_0} \frac{|\alpha_k|^2}{\Delta_k}$ while keeping $\sum_{|k| > k_0} \frac{|\alpha_k|^2}{\Delta_k}$ small. For a fixed spectral abscissa $s_0$, tuning the reservoir parameters to minimize $\Delta_k$ over the signal band reduces to a constrained optimization problem, which can be solved in the single-delay case by using, for instance, \cite[Th.~17]{Boussaada2022Generic} or \cite[Th.~8]{Schmoderer2024Insights}. This analysis extends naturally to multiple delays. The quantity $\Delta_k$ is generalized by replacing $a_1 \cos\frac{2k\pi\tau}{t_1}$ and $a_1\sin\frac{2k\pi\tau}{t_1}$ with their multi-delay counterparts $\sum_j a_j \cos\frac{2k\pi\tau_j}{t_1}$ and $\sum_j a_j \sin\frac{2k\pi\tau_j}{t_1}$. Each additional delay introduces free  parameters that can be used to minimize $\Delta_k$ while keeping the  spectral abscissa fixed. {For the case of two delays, the Multiplicity-Induced-Dominancy property \cite{fueyo2023pole} allows us to assign the spectral abscissa.} \emph{Adding delays improves separation without sacrificing stability}, a direction that will be further explored in future work.

We conclude this section by addressing the practical verification of $\delta$ISS for linear reservoirs. For linear TDS, $\delta$ISS reduces to standard ISS. In this setting, converse Lyapunov-Krasovskii results guarantee that asymptotic stability of the 
input-free system is equivalent to $\delta$ISS and to the existence of a Lyapunov functional~\cite{kharitonov2012time}. Constructive LKFs can be obtained via standard LMIs for systems 
with multiple delays~\cite{fridman2014introduction}. In the single-delay case, under the condition $a_0 + |a_1| < 0,$ a possible quadratic functional is of the form
\begin{equation}
    V(x_t) =  x(t)^2 + |a_1|\int_{-\tau}^{0} x(t+s)^2\,ds.
\end{equation}
We immediately obtain for any $\epsilon>(-2(a_0+|a_1|))^{-1}$,
\begin{align}
    D^{+}V(x_t) \leq (2a_0+2|a_1|+\tfrac{1}{\epsilon})x^2(t)+\epsilon u^2(t). \label{eq_ISS_line}
\end{align}

\section{Simulations and results} \label{sec:simulations}
This section illustrates the framework on two tasks. The goal is not to compete 
with state-of-the-art RC implementations, but to demonstrate that good performance 
is achievable with a minimalist setup, and to validate the separation analysis of 
Section~\ref{lin}.

\paragraph*{NARMA10 benchmark}
We consider the NARMA10 benchmark, a standard RC task in which the target $y_i$ 
depends recursively on the input sequence $(u_k)_{k\in\mathbb{N}}$ and the ten 
previous outputs~\cite{appeltant2012reservoir}. Performance is measured by the 
Normalized Root Mean Square Error:
\[
\mathrm{NRMSE} = \sqrt{\frac{\frac{1}{n}\sum_{i=1}^{n}(y_i - \hat{y}_i)^2}{\mathrm{Var}(y)}}.
\]
We consider a linear reservoir
\begin{equation}\label{eq lin}
    \dot{x}(t) = a_0 x(t) + a_1 x(t-\tau) + u(t) + \xi(t),
\end{equation}
where $\xi$ is Gaussian noise with standard deviation $0.001$. We tested this reservoir with $a_0 = -1$, $a_1 = 0.9e^{-0.1}$, $\tau = 1$ (spectral abscissa $s_0 = -0.1$), and parameters 
$\theta = 0.2$, $N = 10$ (implying $T = 2$), trained on 500 points and tested on 
70. We obtain $\mathrm{NRMSE} = 0.38$. In \cite{appeltant2012reservoir}, the reported value of $\mathrm{NRMSE}$ for a linear delay-based reservoir was $0.4$. In Fig.~\ref{fig:lin}, we compare the target values of the NARMA10 output sequence. For comparison, the 
nonlinear reservoir
\begin{equation}\label{eq log}
    \dot{x}(t) = -x(t) + g\big(x(t-1) + u(t)\big) + \xi(t),
\end{equation}
with $ g(t) = \mathrm{sign}(t)\ln(1+|t|)$ yields $\mathrm{NRMSE} = 0.33$. The marginal improvement over the linear case 
highlights the inherent computational richness of the infinite-dimensional linear 
dynamics. Note that increasing the number of neurons did not lead to any improvement in this specific example.

\begin{figure}[thpb]
    \centering
    \includegraphics[width=0.9\linewidth]{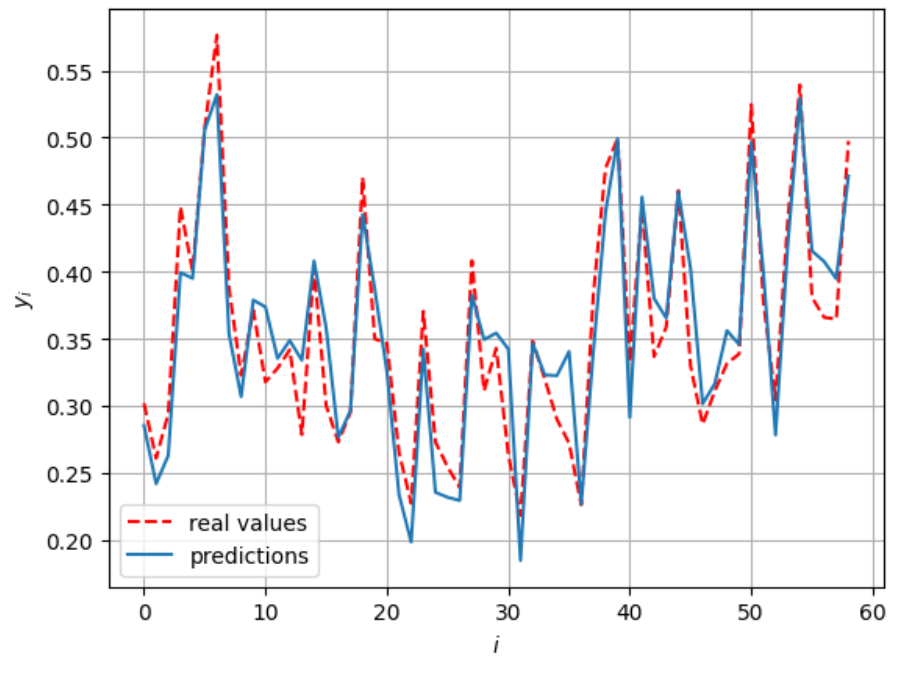}
    \caption{NARMA10 predictions for System~\eqref{eq lin} ($a_0=-1$, 
    $a_1=0.9e^{-0.1}$, $\tau=1$, $\mathrm{NRMSE}=0.38$).}
    \label{fig:lin}
\end{figure}

\paragraph*{Possible trade-off} We now compare two parameter configurations sharing the same spectral abscissa 
$s_0 = -0.1$:
\begin{itemize}
\item Config.~1: $a_0 = -1.0$,\ $a_1 = 0.9e^{-0.1}$,\ $\tau=1$.
    \item Config.~2: $a_0 = -0.5$,\ $a_1 = 0.4e^{-0.1}$,\ $\tau=1$,
\end{itemize}
For both configurations, we computed $\Delta_k^{-1}$ for $k = 1,\ldots,10$ with 
$t_1=T \in \{20,50\}$. Config.~2 yields values of $\Delta_k^{-1}$ 
approximately $80\%$ larger than Config.~1 in both cases, confirming that 
identical stability can coexist with significantly different separation 
performance. For $T=20$, both configurations led to a normalized error around $0.37$ whereas for $T=50$, we found $\mathrm{NRMSE}=0.43$ for Config.~1 and $\mathrm{NRMSE}=0.39$ for Config.~2. We also computed the critical values of $\epsilon$ in~\eqref{eq_ISS_line} for both configurations. We obtain $\epsilon^* = 2.7$ for Config.~1 and 
$\epsilon^* = 3.6$ for Config.~2 implying a possible larger ISS bound in the first case. This illustrates the trade-off between the separation property and the fading memory.  All in all, at fixed spectral abscissa, the parameters $(a_0, a_1)$ are genuine degrees of 
freedom for maximizing low-frequency separation.

\section{Discussion and perspectives}
This paper establishes a control-theoretic foundation for the analysis of time-delay reservoir computers. We have introduced formal definitions of the separation property, robustness, and fading memory, and shown that $\delta$ISS provides a unifying sufficient condition for the latter two. For linear reservoirs, we derived an explicit lower bound for the separation distance via Fourier analysis, and discussed why adding delays is a principled way to improve separation while preserving stability. Several directions remain open. First, a full nonlinear analysis via linearization techniques would extend the spectral results of Section~\ref{lin} to a broader class of reservoirs, with criteria for tuning the spectral abscissa near the imaginary axis, the hallmark of edge-of-chaos operation. Then, geometrical aspects of the separation property need to be considered to understand how the distribution of the states in the phase space impacts the computational capacity of the reservoir. Third, a quantitative analysis of memory capacity in terms of the system parameters would complement the separation bound derived here. Together, these directions outline a roadmap toward a complete mathematical theory of time-delay reservoir computing.



\section*{Appendix} \label{appendix}
We recall in this section a few definitions and standard results on TDS and Lyapunov--Krasovskii theory~\cite{chaillet2023iss}. 

\begin{definition}[Class $\mathcal{K}$, $\mathcal{K}_\infty$ function]
A continuous function $\alpha\colon [0,+\infty) \to [0,+\infty)$ is said to be of \emph{class $\mathcal{K}$} if $\alpha(0)=0$ and $\alpha$ is increasing. Moreover, $\alpha$ is called a \emph{class $\mathcal{K}_\infty$} function if $\displaystyle \lim_{r \to +\infty} \alpha(r) = +\infty$.
\end{definition}

\begin{definition}[Class $\mathcal{KL}$ function]
A continuous function $\beta\colon [0, +\infty) \times [0,+\infty) \to [0,+\infty)$ is of \emph{class $\mathcal{KL}$} if
\begin{enumerate}[label={\roman*)}]
    \item $\forall t \ge 0$, 
    $r \mapsto \beta(r,t)$ is a class $\mathcal{K}$ function,
    \item $\forall r \ge 0$, the function
    $t \mapsto \beta(r,t)$ is non-increasing,
    \item $\forall r \ge 0, \displaystyle \lim_{t \to +\infty} \beta(r,t) = 0$.
\end{enumerate}
\end{definition}

\begin{definition}[Driver derivative]
The Driver derivative of a functional $V \colon C \to \mathbb{R}$ at $x_t$ is defined as
\[
D^+V(x_t) = \limsup_{h \to 0^+} \tfrac{V(x_{t+h}) - V(x_t)}{h},
\]
\end{definition}







\bibliographystyle{abbrv}
\bibliography{biblio.bib}

\begin{thebibliography}{10}

\bibitem{amrane2018qualitative}
S.~Amrane, F.~Bedouhene, I.~Boussaada, and S.-I. Niculescu.
\newblock On qualitative properties of low-degree quasipolynomials: further
  remarks on the spectral abscissa and rightmost-roots assignment.
\newblock {\em Bull. Math. Soc. Sci. Math. Roumanie (N.S.)},
  61(109)(4):361--381, 2018.

\bibitem{appeltant2012reservoir}
L.~Appeltant.
\newblock {\em Reservoir computing based on delay-dynamical systems}.
\newblock PhD thesis, Vrije Universiteit Brussel/Universitat de les Illes
  Balears, 2012.

\bibitem{bernard2022observer}
P.~Bernard, V.~Andrieu, and D.~Astolfi.
\newblock Observer design for continuous-time dynamical systems.
\newblock {\em Annu. Rev. Control}, 53:224--248, 2022.

\bibitem{bishop1995training}
C.~M. Bishop.
\newblock Training with noise is equivalent to {T}ikhonov regularization.
\newblock {\em Neural computation}, 7(1):108--116, 1995.

\bibitem{Boussaada2022Generic}
I.~Boussaada, G.~Mazanti, and S.-I. Niculescu.
\newblock The generic multiplicity-induced-dominancy property from retarded to
  neutral delay-differential equations: When delay-systems characteristics meet
  the zeros of {K}ummer functions.
\newblock {\em C. R. Math. Acad. Sci. Paris}, 360:349--369, 2022.

\bibitem{chaillet2023iss}
A.~Chaillet, I.~Karafyllis, P.~Pepe, and Y.~Wang.
\newblock The {ISS} framework for time-delay systems: a survey.
\newblock {\em Math. Control Signals Systems}, 35(2):237--306, 2023.

\bibitem{corless1996lambert}
R.~Corless, G.~Gonnet, D.~Hare, D.~Jeffrey, and D.~Knuth.
\newblock On the {L}ambert {W} function.
\newblock {\em Advances in Computational mathematics}, 5(1):329--359, 1996.

\bibitem{dambre2012information}
J.~Dambre, D.~Verstraeten, B.~Schrauwen, and S.~Massar.
\newblock Information processing capacity of dynamical systems.
\newblock {\em Scientific reports}, 2(1):514, 2012.

\bibitem{duport2016fully}
F.~Duport, A.~Smerieri, A.~Akrout, M.~Haelterman, and S.~Massar.
\newblock Fully analogue photonic reservoir computer.
\newblock {\em Scientific reports}, 6(1):22381, 2016.

\bibitem{fridman2014introduction}
E.~Fridman.
\newblock {\em Introduction to time-delay systems: Analysis and control}.
\newblock Springer, 2014.

\bibitem{fueyo2023pole}
S.~Fueyo, G.~Mazanti, I.~Boussaada, Y.~Chitour, and S.-I. Niculescu.
\newblock On the pole placement of scalar linear delay systems with two delays.
\newblock {\em IMA J. Math. Control Inform.}, 40(1):81--105, 2023.

\bibitem{hale2013introduction}
J.~Hale and S.~Lunel.
\newblock {\em Introduction to functional differential equations}, volume~99.
\newblock Springer Science \& Business Media, 2013.

\bibitem{hoerl1970ridge}
A.~Hoerl and R.~Kennard.
\newblock Ridge regression: {B}iased estimation for nonorthogonal problems.
\newblock {\em Technometrics}, 12(1):55--67, 1970.

\bibitem{jaeger2001echo}
H.~Jaeger.
\newblock The ``{e}cho state'' approach to analysing and training recurrent
  neural networks-with an erratum note.
\newblock {\em Bonn, Germany: German national research center for information
  technology gmd technical report}, 148(34):13, 2001.

\bibitem{janny2021deep}
S.~Janny, V.~Andrieu, M.~Nadri, and C.~Wolf.
\newblock Deep {KKL}: Data-driven output prediction for non-linear systems.
\newblock In {\em 2021 60th IEEE Conference on Decision and Control (CDC)},
  pages 4376--4381, 2021.

\bibitem{karafyllis2011stability}
I.~Karafyllis and Z.-P. Jiang.
\newblock {\em Stability and stabilization of nonlinear systems}.
\newblock Springer Science \& Business Media, 2011.

\bibitem{kharitonov2012time}
V.~Kharitonov.
\newblock {\em Time-delay systems: {L}yapunov functionals and matrices}.
\newblock Springer Science \& Business Media, 2012.

\bibitem{koster2021insight}
F.~K{\"o}ster, S.~Yanchuk, and K.~L{\"u}dge.
\newblock Insight into delay based reservoir computing via eigenvalue analysis.
\newblock {\em Journal of Physics: Photonics}, 3(2):024011, 2021.

\bibitem{larger2017high}
L.~Larger, A.~Bayl{\'o}n-Fuentes, R.~Martinenghi, V.~Udaltsov, Y.~Chembo, and
  M.~Jacquot.
\newblock High-speed photonic reservoir computing using a time-delay-based
  architecture: {M}illion words per second classification.
\newblock {\em Physical Review X}, 7(1):011015, 2017.

\bibitem{legenstein2007edge}
R.~Legenstein and W.~Maass.
\newblock Edge of chaos and prediction of computational performance for neural
  circuit models.
\newblock {\em Neural networks}, 20(3):323--334, 2007.

\bibitem{Lukosevicius2009Reservoir}
M.~Lukoševičius and H.~Jaeger.
\newblock Reservoir computing approaches to recurrent neural network training.
\newblock {\em Computer Science Review}, 3(3):127--149, 2009.

\bibitem{maass2002real}
W.~Maass, T.~Natschl{\"a}ger, and H.~Markram.
\newblock Real-time computing without stable states: {A} new framework for
  neural computation based on perturbations.
\newblock {\em Neural computation}, 14(11):2531--2560, 2002.

\bibitem{Michiels2014Stability}
W.~Michiels and S.-I. Niculescu.
\newblock {\em Stability, control, and computation for time-delay systems},
  volume~27 of {\em Advances in Design and Control}.
\newblock Society for Industrial and Applied Mathematics (SIAM), Philadelphia,
  PA, second edition, 2014.
\newblock An eigenvalue-based approach.

\bibitem{pepe2006lyapunov}
P.~Pepe and Z.-P. Jiang.
\newblock A {L}yapunov--{K}rasovskii methodology for {ISS} and i{ISS} of
  time-delay systems.
\newblock {\em Systems Control Lett.}, 55(12):1006--1014, 2006.

\bibitem{Schmoderer2024Insights}
T.~Schmoderer, I.~Boussaada, S.-I. Niculescu, and F.~Bedouhene.
\newblock Insights on equidistributed real spectral values in second-order
  delay systems: perspectives in partial pole placement.
\newblock {\em Systems Control Lett.}, 185:Paper No. 105728, 9 pp., 2024.

\bibitem{yeganefarinput}
N.~Yeganefar, P.~Pepe, and M.~Dambrine.
\newblock Input-to-state stability and exponential stability for time-delay
  systems: further results.
\newblock In {\em 2007 46th IEEE Conference on Decision and Control}, pages
  2059--2064, 2007.

\end{thebibliography}

\end{document}